\numberwithin{equation}{section} 
\numberwithin{figure}{section} 
\theoremstyle{plain}
\newtheorem{thm}{Theorem}
  \theoremstyle{remark}
  \newtheorem{rem}[thm]{Remark}
  \theoremstyle{definition}
  \newtheorem{defn}[thm]{Definition}
  \theoremstyle{plain}
  \newtheorem{cor}[thm]{Corollary}
  \theoremstyle{plain}
  \newtheorem{prop}[thm]{Proposition}
  \theoremstyle{plain}
  \newtheorem{lem}[thm]{Lemma}
 \newcommand{\mathscr}{\mathcal}
\newcommand{\N}{\mathbb{N}}
\newcommand{\E}{\mathbb{E}}
\newcommand{\R}{\mathbb{R}}
\newcommand{\e}{\mathrm{e}}
\renewcommand{\P}{\mathbb{P}}
\renewcommand{\Pi}{\pi}
\begin{document}
\selectlanguage{english}
\title{Large deviations asymptotics for large waiting times}

\date{\today}

\author{Marc Kesseböhmer and Lidong Tang}

\address{Universität Bremen, Bibliothekstrasse 1, 28356 Bremen, Germany}

\email{mhk@math.uni-bremen.de, lidong@math.uni-bremen.de}

\urladdr{www.math.uni-bremen.de/stochdyn}

\selectlanguage{american}%

\subjclass[2000]{60F10 primary; 60G70, 60J10 secondary}

\keywords{Large diviations, return times, rare events.}
\selectlanguage{english}%
\begin{abstract}
In this paper we investigate the statistics of large waiting times
(with respect to the total waiting time) for Bernoulli processes.
We determine the corresponding rate functions explicitly and prove
a large deviations asymptotic. By this we have estabished a large
deviation principle for which the rate function is not the Legendre
transform of some free energy function. 
\end{abstract}
\maketitle

\section{Introduction and statement of Main results}

We will investigate a large deviation principle for return time processes
defined over Bernoulli processes. Let $\left(\omega_{i}\right)$ be
a Bernoulli process over the symbol space $\{1,\ldots,r\}$ with probability
vector $\left(p_{1},p_{2},\ldots,p_{r}\right)$, that is, for all
$\left(k_{1},\ldots,k_{n}\right)\in\left\{ 1,\ldots,r\right\} ^{n}$
and $n\in\N$, we have $\P\left(\omega_{i_{1}}=k_{1},\ldots,\omega_{i_{n}}=k_{n}\right)=\prod_{i=1}^{n}p_{k_{i}}$.

Now, let us fix $k\in\left\{ 1,\ldots,r\right\} $ with $p:=p_{k}\in\left(0,1\right)$
and set the \emph{first hitting time} $Z_{1}\left(\omega\right)$
to $k$ to be the first occurrence of $k$ in the sequence $\omega=\left(\omega_{i}\right)$,
i.e.\[
Z_{1}(\omega):=\inf\{n\in\mathbb{N}:n>0,\:\omega_{n}=k\}.\]
Recursively, we define the $n$\emph{-th total waiting time} $Z_{n}(\omega)$
to be the $n$-th occurrence of  the symbol $k$ in the sequence $\left(\omega_{i}\right)$,
i.e. \[
Z_{n}(\omega):=\inf\{\ell\in\mathbb{N}:\ell>Z_{n-1}\left(\omega\right),\:\omega_{\ell}=k\},\: n\geq2.\]
 Let us set $Z_{0}=0$. Then the $n$\emph{-th waiting time} $z_{n}(w)$
between two successive occurrences of the symbol $k$ after the $n$-th
occurrence of $k$ is given, for $n\in\N$, by \[
z_{n}(\omega):=Z_{n}(\omega)-Z_{n-1}(\omega).\]

It is well known that in this situation the $\left(z_{n}\right)$
are i.i.d. random variables and that \[
\mathbb{E}(z_{1})=p^{-1}.\]
 In this paper we are going to study the process $\left(z_{n+1}/Z_{n}\right)$.
Clearly, this process converges to $0$ in probability (even almost
surely). Our aim is to prove a large deviation principle for this
process allowing us to estimate the probability of large waiting times
with respect to the total waiting time. More precisely, for $q>0$
we will show that the following limit \[
\lim\limits _{n}\frac{1}{n}\log\mathbb{P}\left(\frac{z_{n+1}}{Z_{n}}>q\right)=-I(q)\]
exists and that the rate function $I$ can be determined explicitly.
Interestingly, it will turn out that the rate function $I$ is not
convex and hence $I$ is not the Legendre transform of a free energy
function (see Remark \ref{rem:A-prominent-example}). 

To state our main theorem we need to fix some notation. We say that
the two positive real sequences $\left(a_{n}\right)_{n\in\mathbb{N}}$,
$\left(b_{n}\right)_{n\in\mathbb{N}}$ are \emph{comparable}, $a_{n}\asymp b_{n}$,
with \emph{comparability constants} $c_{1}>0$ and $c_{2}>0$ if for
all $n\in\mathbb{N}$ we have $c_{1}a_{n}\leq b_{n}\leq c_{2}a_{n}$
.
\begin{thm}
[Large Deviations Asymptotic]\label{thm:AsympMain} With the notation
from above we have \[
\mathbb{P}\left(\omega:\frac{z_{n+1}(\omega)}{Z_{n}(\omega)}>q\right)\asymp\left(\frac{\left(C+1\right)^{C+1}}{C^{C}}\, p\,\left(1-p\right)^{C(1+q)+q}\right)^{n},\]
 where $C:=C_{p}\left(q\right):=\left(\left(1-p\right)^{-q-1}-1\right)^{-1}$
and the comparability constants may be chosen to be $1-p$ and $1$.\end{thm}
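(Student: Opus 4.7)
The plan is to compute the probability exactly by conditioning on $Z_n$ and evaluating the resulting sum via a generating function, rather than invoking any abstract large-deviation machinery. Since the $z_i$ are i.i.d.\ geometric with $\mathbb{P}(z_1 = j) = p(1-p)^{j-1}$ for $j\in\mathbb{N}$, the total waiting time $Z_n$ has the negative-binomial law $\mathbb{P}(Z_n = m) = \binom{m-1}{n-1}\, p^n\, (1-p)^{m-n}$ for $m\geq n$, and $z_{n+1}$ is independent of $Z_n$ with $\mathbb{P}(z_{n+1} > t) = (1-p)^{\lfloor t \rfloor}$ for $t\geq 0$. Writing $\{z_{n+1}/Z_n > q\}$ as $\{z_{n+1} > qZ_n\}$ and conditioning on $Z_n = m$ gives
\[
\mathbb{P}\!\left(\frac{z_{n+1}}{Z_n} > q\right) \;=\; \sum_{m\geq n} \binom{m-1}{n-1}\, p^n\, (1-p)^{m-n}\, (1-p)^{\lfloor qm\rfloor}.
\]

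Next I would strip the floor. Since $qm - 1 \leq \lfloor qm\rfloor \leq qm$ and $1-p\in(0,1)$, we have the pointwise bracketing $(1-p)\,(1-p)^{\lfloor qm\rfloor} \leq (1-p)^{qm} \leq (1-p)^{\lfloor qm\rfloor}$. Summing term by term, this shows that the idealized sum, obtained by replacing $(1-p)^{\lfloor qm\rfloor}$ with the clean exponential $(1-p)^{qm}$, sandwiches the true probability between $(1-p)\cdot\mathbb{P}(\cdot)$ and $\mathbb{P}(\cdot)$ --- precisely the comparability constants $1-p$ and $1$ stated in the theorem, uniformly in $n$.

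It therefore remains to evaluate the idealized sum in closed form. Setting $u := (1-p)^{1+q} \in (0,1)$, it becomes $\left(p/(1-p)\right)^n \sum_{m\geq n}\binom{m-1}{n-1} u^m$, and the standard negative-binomial generating-function identity $\sum_{m\geq n}\binom{m-1}{n-1} u^m = u^n/(1-u)^n$ collapses it to $\bigl(p(1-p)^q/(1-u)\bigr)^n$. An algebraic check then verifies that this matches the stated form: with $C := u/(1-u)$ one has $C+1 = 1/(1-u)$ and $(C+1)^{C+1}/C^C = 1/\bigl((1-u)\, u^C\bigr)$, and since $u^C = (1-p)^{(1+q)C}$ this rewrites $p(1-p)^q/(1-u)$ as $\tfrac{(C+1)^{C+1}}{C^C}\, p\, (1-p)^{C(1+q)+q}$. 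The only nontrivial step is noticing the exact closed form of the sum --- no Laplace or Stirling analysis is needed, and the ornate combination $(C+1)^{C+1}/C^C$, which one would otherwise anticipate from a saddle-point analysis centred at $m\sim(C+1)n$, simply drops out of the algebra.
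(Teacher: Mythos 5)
Your proof is correct, and it takes a genuinely different --- and more elementary --- route than the paper. Your first step (conditioning on $Z_n=m$, using the negative-binomial law of $Z_n$ and $\mathbb{P}(z_{n+1}>t)=(1-p)^{\lfloor t\rfloor}$) coincides with the paper's Lemma \ref{lem:Probability}, as does the floor-stripping sandwich $(1-p)\,(1-p)^{\lfloor qm\rfloor}\le(1-p)^{qm}\le(1-p)^{\lfloor qm\rfloor}$, which is exactly where both arguments obtain the constants $1-p$ and $1$. The divergence is in what happens next: the paper rewrites the de-floored sum as $\sum_k g_n\e^{f_n(k)}$ with $f_n$ expressed through the Beta function, locates its maximum at $x_n\sim Cn$ via polygamma asymptotics (Lemma \ref{lem:Maximum}), and then invokes the generalised Laplace method (Proposition \ref{thm:Laplace}) together with Stirling's formula and a sum-versus-integral comparison to extract the $n$-th root asymptotics. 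You instead observe that the de-floored sum has an exact closed form: with $u=(1-p)^{1+q}\in(0,1)$ the identity $\sum_{m\ge n}\binom{m-1}{n-1}u^m=u^n(1-u)^{-n}$ collapses it to $\bigl(p(1-p)^q/(1-u)\bigr)^n$, and your algebra with $C=u/(1-u)$, $(C+1)^{C+1}/C^C=\bigl((1-u)u^C\bigr)^{-1}$ and $u^C=(1-p)^{(1+q)C}$ correctly identifies this with the right-hand side of the theorem. Your route is shorter and in fact sharper: the two-sided bound holds for every $n$ with the exact constants $1-p$ and $1$ (no asymptotic regime is needed), and for $q\in\N$ the floor is exact, so you even get equality rather than the $\sim$ of Remark \ref{rem:qinNSimResult}. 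What the paper's longer route buys is the generalised Laplace method itself, which the authors present as independently interesting and which would still apply when the exponentiated sum admits no closed-form generating function; but for Theorem \ref{thm:AsympMain} as stated, your exact evaluation fully suffices.
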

\begin{rem}
\label{rem:qinNSimResult}We will see in the proof of Theorem \ref{thm:AsympMain}
that for $q\in\mathbb{N}$ a stronger asymptotic holds. Namely, \[
\mathbb{P}\left(\omega:\frac{z_{n+1}(\omega)}{Z_{n}(\omega)}>q\right)\sim\left(\frac{(C+1)^{C+1}}{C^{C}}p(1-p)^{(1+q)C+q}\right)^{n},\]
where $a_{n}\sim b_{n}$ means that $\lim_{n\rightarrow\infty}a_{n}/b_{n}=1$
for some positive sequences $\left(a_{n}\right)$ and $\left(b_{n}\right)$.
\end{rem}
As an immediate consequence we obtain a large deviation deviation
principle. For this let us recall the \emph{Large Deviation Principle
}(see e.g.\ \cite{MR793553,DemboZeitouni:98}). 
\begin{defn}
\label{def:LD} Let $\mathcal{H}$ be a complete separable metric
space, $\mathcal{B}\left(\mathcal{H}\right)$ the Borel $\sigma$-algebra
of $\mathcal{H}$, and $\left(Q_{n}\right)_{n\in\mathbb{N}}$ a sequence
of Borel probability measures on $\mathcal{B}(\mathcal{H})$. We say
$\left(Q_{n}\right)$ \textit{\emph{fulfills a}}\textit{ Large Deviation
Principle} with normalising sequence $\left(a_{n}\right)_{n\in\mathbb{N}}\in\R_{+}^{\N}$
($\lim\limits _{n\rightarrow\infty}a_{n}\rightarrow\infty$) and \emph{rate
function }$I:\mathcal{H}\rightarrow[0,\infty]$ if the following properties
are fulfilled.
\begin{enumerate}
\item $I$ is lower semi-continuous on $\mathcal{H}$, 
\item $I$ has compact level sets, 
\item $\limsup\limits _{n\rightarrow\infty}a_{n}^{-1}\ln Q_{n}\{K\}\leq-\inf_{x\in K}I(x)$
for all closed sets $K\subset\mathcal{H}$, 
\item $\liminf\limits _{n\rightarrow\infty}a_{n}^{-1}\ln Q_{n}\{G\}\geq-\inf_{x\in G}I(x)$
for all open sets $G\subset\mathcal{H}$. 
\end{enumerate}
\end{defn}
\begin{cor}
\label{cor:MainTheorm}Let $Q_{n}$ be the distribution of $z_{n+1}/Z_{n}$
with respect to the Bernoulli measure $\mathbb{P}$, i.e. $Q_{n}\left(A\right)=\mathbb{P}\left(\omega:z_{n+1}(\omega)/Z_{n}(\omega)\in A\right)$.
Then $\left(Q_{n}\right)_{n\in\mathbb{N}}$ fulfills the Large Deviation
Principle with normalising sequence $\N$ and rate function $I_{p}$
given for $q>0$ by \[
I_{p}\left(q\right):=-\left(C+1\right)\log\left(C+1\right)+C\log C-\log p-\left((1+q)C+q\right)\log(1-p)\]
and, for $q\leq0$, by $I_{p}\left(q\right)=+\infty$. The rate function
$I_{p}$ is strictly increasing and concave on $\left(0,\infty\right)$
(see Fig. \ref{fig:A-three-dimensionalPlot} for a parametric plot
of $I_{p}$ for different values of the parameters $p\in\left(0,1\right)$). 
\end{cor}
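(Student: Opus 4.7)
The plan is to extract the Large Deviation Principle directly from Theorem \ref{thm:AsympMain}: taking logarithms and dividing by $n$ in the comparability statement immediately yields
\[
\lim_{n\to\infty}\frac{1}{n}\log \P\bigl(z_{n+1}/Z_n>q\bigr)=-I_p(q),\qquad q>0,
\]
since the logarithm of the comparability constants vanishes after division by $n$. Before turning to the LDP bounds, I verify the qualitative properties of $I_p$ claimed in the statement. Viewing $C=C_p(q)$ as a function of $q$, the defining relation yields the identity $(1-p)^{q+1}=C/(C+1)$; substituting this into the chain-rule expansion of $I_p'(q)$ makes all contributions proportional to $C'(q)$ cancel, leaving the compact formula $I_p'(q)=-(C+1)\log(1-p)>0$. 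A further differentiation, together with $C'(q)=C(C+1)\log(1-p)$ (which follows from the same identity), gives $I_p''(q)=-C(C+1)(\log(1-p))^2<0$, so $I_p$ is strictly increasing and strictly concave on $(0,\infty)$.

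For the LDP upper bound on a closed $K\subseteq\mathcal{H}$, set $q_\ast:=\inf\bigl(K\cap(0,\infty)\bigr)$. If $K\cap(0,\infty)=\emptyset$ then $Q_n(K)=0$ and the bound is trivial; if $q_\ast=0$ the trivial bound $\frac{1}{n}\log Q_n(K)\leq 0$ already matches $-\inf_K I_p$, since $I_p(q)\to 0$ as $q\downarrow 0$. For $q_\ast>0$, the monotonicity of $I_p$ gives $\inf_K I_p=I_p(q_\ast)$, and the inclusion $K\cap(0,\infty)\subseteq(q_\ast-\delta,\infty)$ together with Theorem \ref{thm:AsympMain} and the continuity of $I_p$ at $q_\ast$ yields $\limsup_n \frac{1}{n}\log Q_n(K)\leq -I_p(q_\ast)$ after letting $\delta\downarrow 0$. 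For the lower bound on an open $G$, I pick $q_0\in G\cap(0,\infty)$ and $\epsilon>0$ with $(q_0,q_0+\epsilon)\subseteq G$, and write
\[
Q_n(G)\;\geq\;\P\bigl(z_{n+1}/Z_n>q_0\bigr)\;-\;\P\bigl(z_{n+1}/Z_n\geq q_0+\epsilon\bigr).
\]
Theorem \ref{thm:AsympMain} bounds the first term below by a constant times $e^{-nI_p(q_0)}$ and the second above by a constant times $e^{-nI_p(q_0+\epsilon/2)}$, so by strict monotonicity of $I_p$ the first term dominates for large $n$ and $\liminf_n \frac{1}{n}\log Q_n(G)\geq-I_p(q_0)$; optimizing over $q_0\in G\cap(0,\infty)$ yields the required bound $\geq-\inf_G I_p$.

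The main obstacle is exactly this lower bound on open sets: expressing the probability of a small interval as a difference of two tail probabilities risks destroying the leading exponential order unless a quantitative separation between the rates is available. It is the strict monotonicity of $I_p$, derived in the first step, that provides this separation and makes the lower bound work. Finally, the required lower semicontinuity and compactness of level sets of $I_p$ are inherited from its continuity on $(0,\infty)$, the boundary behavior $I_p(q)\to 0$ as $q\downarrow 0$ and $I_p(q)\to\infty$ as $q\to\infty$, together with the convention $I_p\equiv+\infty$ on $(-\infty,0]$.
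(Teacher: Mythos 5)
Your argument is correct and follows essentially the same route as the paper: the paper simply declares the identification of the rate function to be immediate from Theorem \ref{thm:AsympMain} and then checks monotonicity and concavity via the derivatives, and your formulas $I_{p}'(q)=-(C+1)\log(1-p)$ and $I_{p}''(q)=-C(C+1)\left(\log(1-p)\right)^{2}$ coincide with (\ref{eq:firstDerOfI}) and (\ref{eq:SecDerivOfI}) once one uses $(1-p)^{q+1}=C/(C+1)$. Your explicit derivation of the two LDP bounds from the tail asymptotics (monotonicity of $I_{p}$ for closed sets; writing the probability of a small interval as a difference of two tails, with strict monotonicity supplying the exponential separation, for open sets) is precisely the routine step the paper leaves implicit, and it is sound. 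The one point to flag is your closing sentence: with the convention $I_{p}(q)=+\infty$ for $q\leq0$ and the boundary behaviour $I_{p}(q)\to0$ as $q\downarrow0$ that you yourself invoke, $I_{p}$ fails to be lower semicontinuous at $0$ and the level sets $\left\{ I_{p}\leq c\right\} =(0,q_{c}]$ are not compact, so properties (1) and (2) of Definition \ref{def:LD} do not hold as literally claimed; this boundary defect is inherited from the corollary as stated (the paper's proof does not address these two properties at all) and is repaired by setting $I_{p}(0)=0$, the natural value since $z_{n+1}/Z_{n}\to0$ in probability.
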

\begin{figure}[h]
\centering \includegraphics[width=0.6\textwidth]{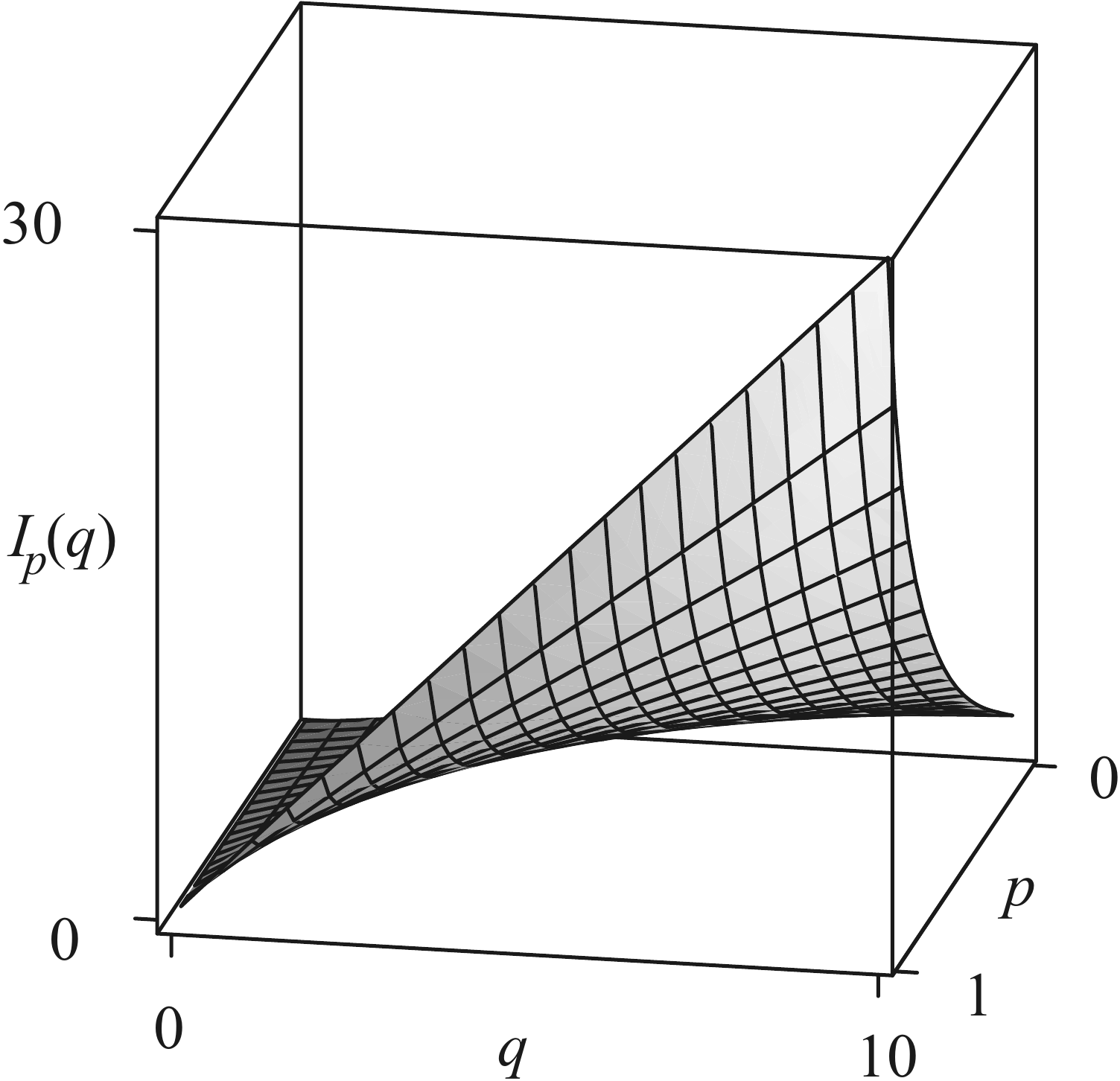}
\caption{\label{fig:A-three-dimensionalPlot}A parametrical plot of the rate
function $I_{p}$ for different parameters $p\in\left(0,1\right)$. }

\label{fig:entropie3D} 
\end{figure}

\begin{rem}
\label{rem:A-prominent-example} At this point we would like to recall
a classical large deviation law for the empirical means of i.i.d.\
random variables. Let $\left(X_{n}\right)$ be a sequence of i.i.d.\
random variables with finite moments and $S_{n}:=\sum_{k=1}^{n}X_{k}$.
Then by the weak law of large numbers we have that the empirical mean
$S_{n}/n$ converges in probability to $\E\left(X_{1}\right)=:\mu$
as $n$ tends to infinity. In this situation a large deviation principle
holds with convex rate function $I$, that is \[
\limsup\limits _{n}\frac{1}{n}\log\mathbb{P}\left(\frac{X_{n}}{n}>t\right)=-I(t),\quad t>\mu.\]
In fact, $I$ is given by the Legendre transform of the Helmholtz
free energy function \[
H_{X}:t\mapsto\mathbb{E}\left(\exp\left(t\cdot X_{1}\right)\right),\]
which is necessary convex. Since in our situation the rate function
is not convex it cannot be the Legendre transform of a free energy
function. More precisely, if we set $w_{n}:=nz_{n+1}/Z_{n}$ then
a possible candidate for the free Helmholtz energy would be \[
H_{w}:t\mapsto\lim_{n\to\infty}\frac{1}{n}\log\mathbb{E}\left(\exp\left(t\cdot w_{n}\right)\right).\]
Since $I_{p}$ is not the Legendre transform of $H_{w}$ we conclude
by a general Large Deviation Theorem of Ellis \cite{MR793553} that
$H$ does not define a finite and differentiable function (for a generalisation
see also \cite{MR1819804}). 
\end{rem}
\renewcommand{\thesubfigure}{{\rm(\alph{subfigure})}}
\captionsetup[subfigure]{labelformat=simple, labelsep=space, listofformat=subsimple}

\begin{figure}[!htbp]
\subfloat[$p=0.1$ ]{\includegraphics[width=0.47\textwidth]{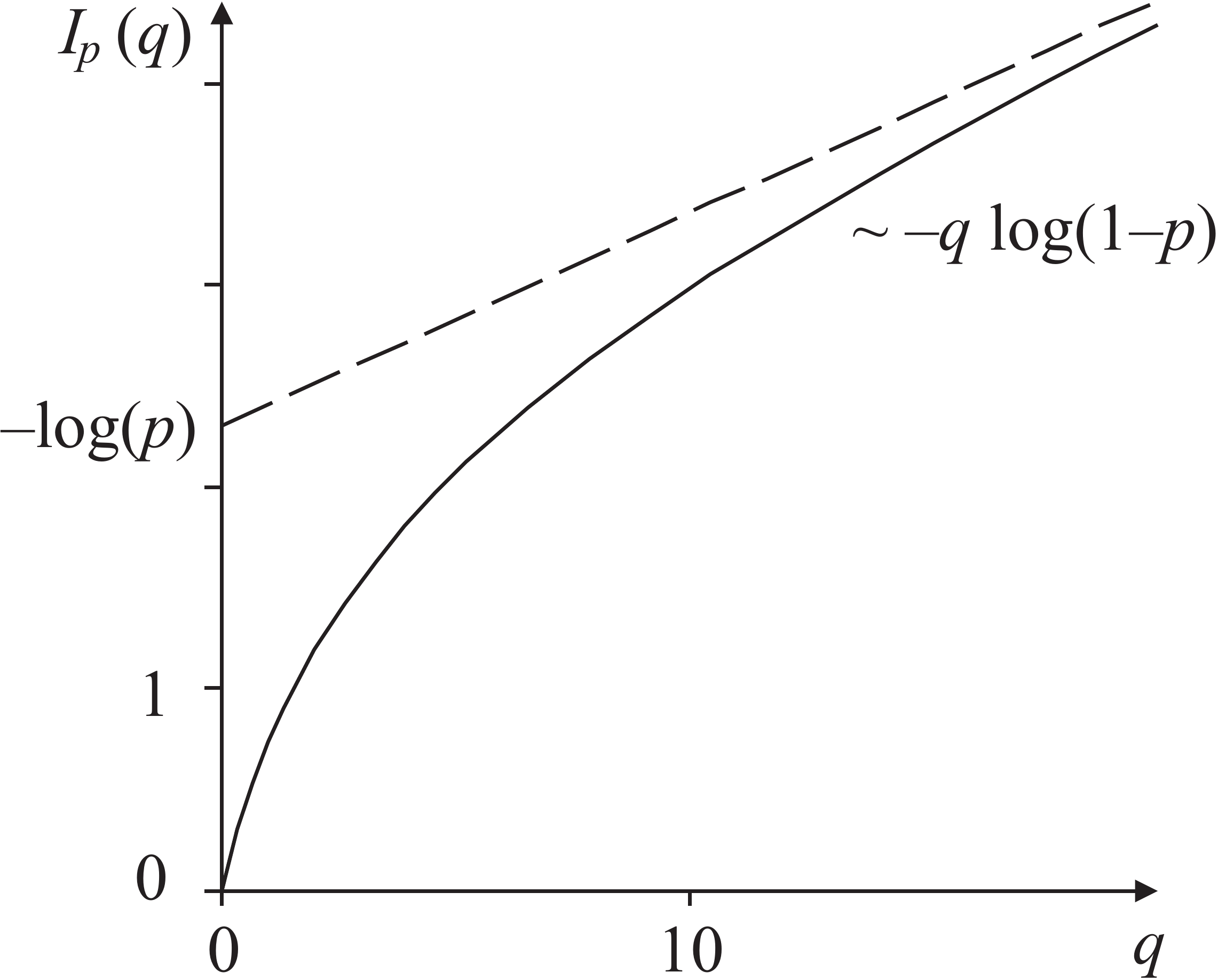}
} \hspace*{\fill}\subfloat[\label{fig:asmpLineare}$p=0.3$ ]{\includegraphics[width=0.47\textwidth]{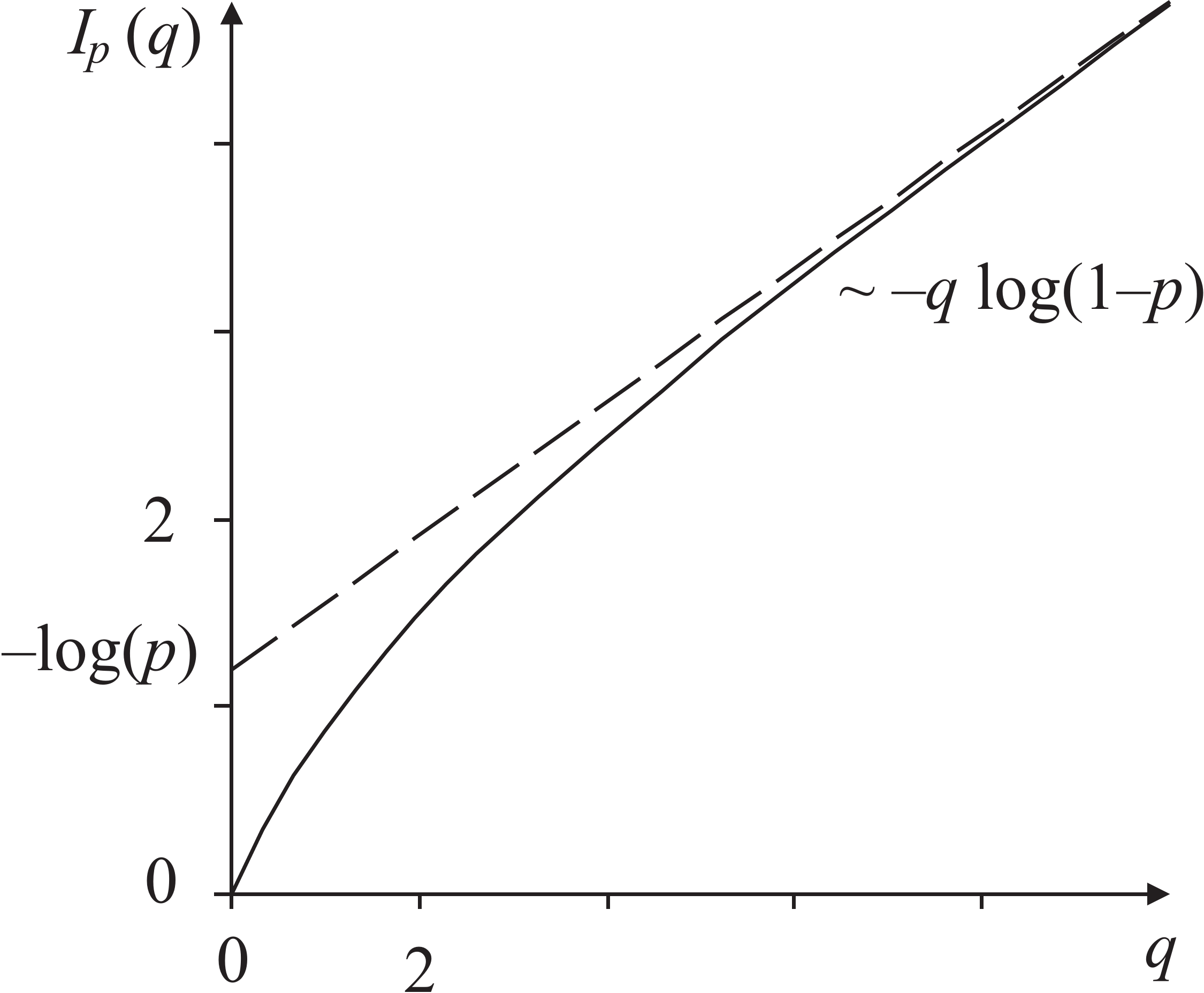}

}\caption{\label{fig:Entropiefunktionen} Graphs of the rate functions for different
values of $p$. }

\end{figure}

\begin{rem}
Furthermore, Fig.\ \ref{fig:Entropiefunktionen} might suggest that
the rate function $I_{p}$ approaches an asymptotic line for $q$
tending to infinity. Indeed, since the parameter $C_{p}\left(q\right)=\left((1-p)^{-(1+q)}-1\right)^{-1}$
vanishes for $q\to\infty$, a straight forward calculation gives\begin{eqnarray*}
I_{p}(q) & \sim & -q\cdot\ln(1-p)-\ln(p)\end{eqnarray*}
for $q$ tending to infinity. That is, the asymptotic line has slope
$-\log\left(1-p\right)$ and intersects the $I_{p}$-axis in $-\log\left(p\right)$
(see Fig.\ \ref{fig:Entropiefunktionen}\subref{fig:asmpLineare}).
\end{rem}
\begin{rem}
We would also like to remark that the sets $\left\{ z_{n+1}/Z_{n}>q\right\} $
considered in this paper play a crucial rôle in the fractal analysis
of the set of non-differentiability of certain singular distribution
functions (Devil's staircases) (cf.\ \cite{Li:07,LiXiaoDekking:02};
for another account on the connection between fractal geometry and
large deviations we refer to \cite{MR1948295}). 
\end{rem}
The proof of Theorem \ref{thm:AsympMain} heavily depends on a generalised
version of Laplace's Method. Since this statement might be of some
interest for itself we will give the precise statement at this point. 
\begin{prop}
[Generalised Laplace's Methode]\label{thm:Laplace} For $n\in\N$,
let $F_{n}\in\mathcal{C}^{3}(\mathbb{R})$ be a concave function having
a unique absolute maximum in $x_{n}\in\mathbb{R}$ such that $F''_{n}(x_{n})<0$.
Also suppose that there exists a sequence $\left(r_{n}\right)_{n\in\mathbb{N}}\in\mathbb{R}_{+}^{\mathbb{N}}$
such that for \[
\sigma_{n}:=\sqrt{\frac{1}{-F''_{n}(x_{n})}}\qquad\mbox{and}\qquad\eta_{n}:=\sup\left\{ \left|F'''_{n}(\xi)\right|:\xi\in(x_{n}-r_{n},x_{n}+r_{n})\right\} \]
 we have, as $n$ tends to infinity, \[
\frac{r_{n}}{\sigma_{n}}\to\infty\qquad\mbox{and}\qquad\delta_{n}:=\sigma_{n}^{2}r_{n}\eta_{n}\to0.\]
Let $I\subset\mathbb{R}$ be an interval such that for all $n\in\N$
large enough we have $(x_{n}-r_{n},x_{n}+r_{n})\subset I$. Then \[
\int_{I}\e^{F_{n}(x)}dx\sim\sqrt{\frac{2\pi}{-F''_{n}(x_{n})}}\,\e^{F_{n}(x_{n})}.\]

\end{prop}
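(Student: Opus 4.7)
The plan is a quantitative Laplace-method argument: localize the integral in a neighborhood of $x_n$ of the form $(x_n-r_n,x_n+r_n)$, perform a Taylor expansion there, rescale to the Gaussian scale $\sigma_n$, and control tails using concavity.

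First I would split
\[
\int_I \e^{F_n(x)}\,dx \;=\;\int_{x_n-r_n}^{x_n+r_n}\e^{F_n(x)}\,dx \;+\;\int_{I\setminus(x_n-r_n,x_n+r_n)}\e^{F_n(x)}\,dx,
\]
and treat each piece separately.

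For the central piece, Taylor's theorem with remainder gives
\[
F_n(x) \;=\; F_n(x_n) \,-\,\frac{(x-x_n)^2}{2\sigma_n^2} \,+\, R_n(x),\qquad |R_n(x)|\le \tfrac{\eta_n}{6}|x-x_n|^3,
\]
using $F_n'(x_n)=0$ and $F_n''(x_n)=-\sigma_n^{-2}$. Substituting $y=(x-x_n)/\sigma_n$ the central integral becomes
\[
\sigma_n\,\e^{F_n(x_n)}\int_{-r_n/\sigma_n}^{r_n/\sigma_n}\e^{-y^2/2}\,\e^{R_n(x_n+\sigma_n y)}\,dy.
\]
The crucial observation is that on the central interval
\[
|R_n(x_n+\sigma_n y)| \;\le\; \tfrac{\eta_n\sigma_n^3}{6}|y|^3 \;\le\; \tfrac{\eta_n\sigma_n^3}{6}|y|^2\cdot\tfrac{r_n}{\sigma_n} \;=\;\tfrac{\delta_n}{6}\,y^2,
\]
which is the right bound to multiply against the Gaussian. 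Since $\delta_n\to0$, for large $n$ we have $|R_n|\le y^2/6$, hence $\e^{-y^2/2}\e^{R_n}\le \e^{-y^2/3}$, giving a uniform integrable dominating function. Using $|\e^{R_n}-1|\le |R_n|\,\e^{|R_n|}\le \tfrac{\delta_n}{6}y^2\,\e^{y^2/6}$, I would conclude
\[
\int_{-r_n/\sigma_n}^{r_n/\sigma_n}\e^{-y^2/2}\e^{R_n}\,dy\;=\;\int_{-\infty}^{\infty}\e^{-y^2/2}\,dy \;+\;O(\delta_n)\;+\;o(1) \;\longrightarrow\;\sqrt{2\pi},
\]
where the $o(1)$ is the truncation error coming from $r_n/\sigma_n\to\infty$.

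For the tails I would use concavity. Since $F_n''(\xi)\le F_n''(x_n)+\eta_n r_n = -\sigma_n^{-2}(1-\delta_n)$ on $(x_n-r_n,x_n+r_n)$, for $n$ large we have $F_n'(x_n+r_n)\le -r_n/(2\sigma_n^2)$ and $F_n(x_n+r_n)\le F_n(x_n)-r_n^2/(3\sigma_n^2)$. Concavity then yields, for $x>x_n+r_n$,
\[
F_n(x)\;\le\; F_n(x_n)\,-\,\tfrac{r_n^2}{3\sigma_n^2}\,-\,\tfrac{r_n}{2\sigma_n^2}\bigl(x-x_n-r_n\bigr),
\]
so the right tail is bounded by $\tfrac{2\sigma_n^2}{r_n}\e^{F_n(x_n)-r_n^2/(3\sigma_n^2)}$, which is $o(\sigma_n\e^{F_n(x_n)})$ because $r_n/\sigma_n\to\infty$. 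The left tail is handled symmetrically. Combining the two estimates gives $\int_I\e^{F_n}\,dx\sim \sigma_n\sqrt{2\pi}\,\e^{F_n(x_n)}$, as required.

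I expect the main technical obstacle to be the central-region estimate: the naive cubic remainder bound $\tfrac{\eta_n\sigma_n^3}{6}|y|^3$ does \emph{not} go to zero uniformly on $|y|\le r_n/\sigma_n$ and is not a priori dominated by a Gaussian. The key idea is to trade one factor of $|y|$ for the $\sigma_n/r_n$ that the domain provides, converting the cubic bound into the quadratic bound $\tfrac{\delta_n}{6}y^2$ which can be absorbed into the Gaussian weight; this is exactly why the hypothesis was formulated in terms of $\delta_n=\sigma_n^2 r_n\eta_n$ rather than only in terms of $\sigma_n^3\eta_n$.
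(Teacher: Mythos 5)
Your proof is correct and follows essentially the same route as the paper: Taylor expansion on the window $(x_{n}-r_{n},x_{n}+r_{n})$ with the cubic remainder traded, via $|x-x_{n}|\le r_{n}$, into a $\tfrac{\delta_{n}}{6}y^{2}$ perturbation of the Gaussian, plus a concavity-based linear majorant giving exponentially negligible tails. The only cosmetic differences are that you obtain the tail majorant from the boundary derivative bound $F_{n}'(x_{n}\pm r_{n})\le -r_{n}/(2\sigma_{n}^{2})$ rather than the paper's chord through $\left(x_{n},F_{n}(x_{n})\right)$ and $\left(x_{n}\pm r_{n},G_{n}(x_{n}\pm r_{n})+d_{n}\right)$, and your quantitative $O(\delta_{n})+o(1)$ treatment of the central integral is somewhat more explicit than the paper's corresponding asymptotic step.
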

See Remark \ref{rem:OneSided} for corresponding one-sided versions
of this theorem.

The paper is organised as follows. In Section \ref{sec:Proofs-of-theLD}
we give a proof of Corollary \ref{cor:MainTheorm} and Theorem \ref{thm:AsympMain}
and in Section \ref{chapter:allgmeinLM} we will prove the generalised
Laplace's Method. The Appendix is devoted to some special functions
and their basic properties which are crucial in the proof of Theorem
\ref{thm:AsympMain}.

\section{Proof of the large deviation laws\label{sec:Proofs-of-theLD}}

\subsection{Proof of Corollary \ref{cor:MainTheorm}}
\begin{proof}
[Proof of  Corollary \ref{cor:MainTheorm}] The fact that $I_{p}$
is the rate function as claimed in the corollary follows immediately
form Theorem \ref{thm:AsympMain}. The first derivative of $I_{p}$
is given by \begin{equation}
I'_{p}(q)=-\left(\frac{1}{\left(1-p\right)^{-(1+q)}-1}+1\right)\ln\left(1-p\right),\label{eq:firstDerOfI}\end{equation}
which is strictly positive on $\left(0,\infty\right)$. Therefore,
the rate function is strictly increasing on $(0,\infty)$. For the
second derivative we have \begin{equation}
I''_{p}(q)=-\left(\frac{1}{1-p}\right)^{(1+q)}\left(\frac{\ln\left(1-p\right)}{\left(1-p\right)^{-(1+q)}-1}\right)^{2}\label{eq:SecDerivOfI}\end{equation}
which is strictly negative on $\left(0,\infty\right)$ showing the
concavity of $I_{p}$.
\end{proof}

\subsection{Proof of Theorem \ref{thm:AsympMain}}

Theorem \ref{thm:AsympMain} is an immediate consequence of the following
lemmas and Theorem \ref{thm:Laplace}. For the definitions of the
Beta Function $B$ and the Polygamma Function $\psi_{k}$, $k\in\N_{0}$,
we refer the reader to the Appendix of this paper. 
\begin{lem}
\label{lem:Probability} For the large deviation probability we have
\[
\mathbb{P}\left(\omega\Big|\frac{z_{n+1}(\omega)}{Z_{n}(\omega)}>{q}\right)\asymp\sum\limits _{x=0}^{\infty}g(n)e^{f_{n}(x)},\]
 where $f_{n}:x\mapsto-\ln B(x+1,n-1)+x(1+q)\ln(1-p)$ and $g_{n}:=\frac{p^{n}(1-p)^{nq}}{n-1}$,
$n\in\N$ and the comparability constants can be chosen to be $1-p$
and $1$. \end{lem}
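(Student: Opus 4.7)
My plan is to condition on the value of $Z_{n}$ and exploit independence. Since the waiting times $(z_{i})$ are i.i.d., $z_{n+1}$ is independent of $Z_{n}=z_{1}+\cdots+z_{n}$, whence
\[
\mathbb{P}\!\left(\frac{z_{n+1}}{Z_{n}}>q\right)=\sum_{m=n}^{\infty}\mathbb{P}(Z_{n}=m)\,\mathbb{P}(z_{n+1}>qm).
\]
The first factor is the negative-binomial weight $\mathbb{P}(Z_{n}=m)=\binom{m-1}{n-1}p^{n}(1-p)^{m-n}$, and the second is the tail of a geometric law, $\mathbb{P}(z_{n+1}>t)=(1-p)^{\lfloor t\rfloor}$ for every real $t\ge 0$, because $z_{n+1}$ is integer-valued.

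Setting $x=m-n\ge 0$ and using the Beta-function identity $\binom{x+n-1}{n-1}=1/((n-1)\,B(x+1,n-1))$, the above sum rewrites as
\[
\sum_{x=0}^{\infty}\frac{p^{n}(1-p)^{nq}}{n-1}\cdot\frac{(1-p)^{x(1+q)}}{B(x+1,n-1)}\cdot(1-p)^{\lfloor q(x+n)\rfloor-q(x+n)}.
\]
The first two factors multiply to $g_{n}\,\e^{f_{n}(x)}$, so each term of the probability sum equals the corresponding term of $T_{n}:=\sum_{x}g_{n}\e^{f_{n}(x)}$ up to the fractional-part correction $(1-p)^{\lfloor q(x+n)\rfloor-q(x+n)}$.

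All that remains is to estimate that correction. Since $y-1<\lfloor y\rfloor\le y$, the factor $(1-p)^{\lfloor y\rfloor-y}$ lies in $[1,(1-p)^{-1})$; hence each term of the probability sum is at least the corresponding term of $T_{n}$ and strictly less than $(1-p)^{-1}$ times it. Summing and rearranging yields
\[
(1-p)\,\mathbb{P}\!\left(\frac{z_{n+1}}{Z_{n}}>q\right)\le T_{n}\le\mathbb{P}\!\left(\frac{z_{n+1}}{Z_{n}}>q\right),
\]
which is exactly the asserted comparability with constants $1-p$ and $1$.

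No step is genuinely difficult; the whole argument is bookkeeping. The only subtleties are the integer/real arithmetic surrounding the floor function and correctly reading off the direction of the comparability constants (here the probability plays the role of $a_{n}$ in the definition $c_{1}a_{n}\le b_{n}\le c_{2}a_{n}$, so that the bounds above indeed realise $c_{1}=1-p$ and $c_{2}=1$).
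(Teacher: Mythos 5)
Your proof is correct and takes essentially the same route as the paper: both decompose the event over the value of $Z_{n}$ via the negative-binomial law, use the geometric tail $(1-p)^{\lfloor qm\rfloor}$, rewrite the binomial coefficient through the Beta function, and absorb the floor-function discrepancy into a factor lying in $[1,(1-p)^{-1})$, which gives precisely the constants $1-p$ and $1$. Your version merely makes the independence argument and the direction of the comparability bounds more explicit than the paper's chain of (in)equalities.
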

\begin{proof}
For $x\in\R$ let $\lfloor x\rfloor$ denote the largest integer not
exceeding $x$. Using the definition (\ref{eq:BetaDef}) of the Beta
Function as introduced in the Appendix we have \begin{eqnarray*}
\mathbb{P}\left(\frac{z_{n+1}}{Z_{n}}>q\right) & = & \sum\limits _{k=n}^{\infty}\left({k-1 \choose n-1}p^{n}(1-p)^{k-n}\left(\sum\limits _{l=\lfloor kq\rfloor+1}^{\infty}(1-p)^{l-1}p\right)\right)\\
 & = & \sum\limits _{k=n}^{\infty}\frac{(k-1)!}{(n-1)!(k-n)!}p^{(n+1)}(1-p)^{k-n}\left(\sum\limits _{l=\lfloor kq\rfloor+1}^{\infty}(1-p)^{l-1}\right)\\
 & = & \sum\limits _{k=n}^{\infty}\frac{(k-1)!}{(n-1)!(k-n)!}p^{(n+1)}(1-p)^{k-n}\frac{(1-p)^{\lfloor kq\rfloor}}{p}\\
 & = & \sum\limits _{k=0}^{\infty}\frac{(k+n-1)!}{(n-1)!(k)!}p^{n}(1-p)^{k+\lfloor(k+n)q\rfloor}\\
 & \asymp & \sum\limits _{k=0}^{\infty}\frac{(k+n-1)!}{(n-1)!(k)!}p^{n}(1-p)^{k+(k+n)q}\\
 & = & \sum\limits _{k=0}^{\infty}\frac{\Gamma(k+n)}{(n-1)\Gamma(n-1)\Gamma(k+1)}p^{n}(1-p)^{k+q(k+n)}\\
 & = & \sum\limits _{k=0}^{\infty}\frac{1}{(n-1)B(k+1,n-1)}p^{n}(1-p)^{k+q(k+n)}\\
 & = & \sum\limits _{k=0}^{\infty}\frac{p^{n}(1-p)^{nq}}{n-1}e^{-\ln B(k+1,n-1)+k(1+q)\ln(1-p)}=\sum\limits _{k=0}^{\infty}g_{n}e^{f_{n}(k)}.\end{eqnarray*}
In here, the comparability constants can be chosen to be $1-p$ and
$1$. To verify Remark \ref{rem:qinNSimResult} we notice that for
$q\in\N$ the comparability may in fact be replaced by equality. This
proves the Lemma.\end{proof}
\begin{lem}
\label{lem:Maximum} For $n>2$ we have that for the maximum $x_{n}$
of \[
f_{n}:\R_{+}\to\R;\; x\mapsto-\ln B(x+1,n-1)+x(1+q)\ln(1-p)\]
 we have $x_{n}\sim Cn$ with $C=C_{p}\left(q\right)=\left(\left(1-p\right)^{-q-1}-1\right)^{-1}$
as defined in the introduction. \end{lem}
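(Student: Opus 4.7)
The plan is to express $f_n$ through Gamma functions, write down the critical point equation in terms of the digamma function $\psi_0=\psi$, and extract the leading asymptotic using the standard expansion $\psi(y)=\ln y+O(1/y)$.

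First I would use $B(x+1,n-1)=\Gamma(x+1)\Gamma(n-1)/\Gamma(x+n)$ (see the Appendix) to rewrite
\[
f_n(x)=\ln\Gamma(x+n)-\ln\Gamma(x+1)-\ln\Gamma(n-1)+x(1+q)\ln(1-p).
\]
Differentiating in $x$ then yields
\[
f_n'(x)=\psi(x+n)-\psi(x+1)+(1+q)\ln(1-p),
\]
\[
f_n''(x)=\psi_1(x+n)-\psi_1(x+1).
\]
Since the trigamma function $\psi_1$ is strictly decreasing on $(0,\infty)$ and $n-1>1$, we have $f_n''<0$ throughout $(0,\infty)$, so $f_n$ is strictly concave and the maximum $x_n$, if it exists in $(0,\infty)$, is unique and characterised by $f_n'(x_n)=0$.

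Second, I would solve the critical point equation asymptotically. Setting $\alpha:=-(1+q)\ln(1-p)>0$, the equation $f_n'(x_n)=0$ reads
\[
\psi(x_n+n)-\psi(x_n+1)=\alpha.
\]
Using the asymptotic $\psi(y)=\ln y+O(y^{-1})$ valid as $y\to\infty$, this is equivalent to
\[
\ln\frac{x_n+n}{x_n+1}=\alpha+O\!\left(\frac{1}{x_n}\right)\!.
\]
To see that indeed $x_n\to\infty$ (at least linearly in $n$), I would first note that $\psi(x+n)-\psi(x+1)$ tends to $0$ as $x\to\infty$ (for any fixed $n$) and to $+\infty$ as $x\to 0^+$ (it includes the term $-\psi(x+1)\to +\psi(1)$ is bounded, actually; more usefully, at $x=0$ the value equals $\psi(n)-\psi(1)=\sum_{k=1}^{n-1}1/k\to\infty$). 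Hence for large $n$ there is a unique solution $x_n>0$ and it must satisfy $\ln((x_n+n)/(x_n+1))\to\alpha>0$, forcing $x_n$ to grow linearly with $n$. Writing $x_n=c_n n$ with $c_n>0$ bounded below and above (since the ratio $(x_n+n)/(x_n+1)=(c_n+1)/(c_n+o(1))$ must stay near $e^\alpha$), one passes to the limit and obtains $(c+1)/c=e^{\alpha}=(1-p)^{-(1+q)}$, i.e.\ $c=((1-p)^{-(1+q)}-1)^{-1}=C$. Thus $x_n/n\to C$, which is exactly $x_n\sim Cn$.

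The only delicate step is bounding the error when replacing $\psi$ by $\ln$: since $x_n\asymp n$, the correction term is $O(1/n)$ while the leading equation is $\Theta(1)$, so the argument goes through cleanly. No serious obstacle beyond keeping track of this error and verifying that $x_n$ does not drift to $0$ or $\infty$ faster than linearly in $n$, both of which are ruled out by the monotonicity of $f_n'$ together with the explicit values of $f_n'$ at the candidate endpoints.
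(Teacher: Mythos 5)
Your argument is correct and follows essentially the same route as the paper: both write $f_n'(x)=\psi_0(x+n)-\psi_0(x+1)+(1+q)\ln(1-p)$, approximate the digamma difference by $\ln\frac{x+n}{x+1}$ (the paper via the sum--integral comparison of $\sum_{y=1}^{n-1}(x+y)^{-1}$, you via $\psi_0(y)=\ln y+O(1/y)$), and solve $\ln\frac{x+n}{x+1}=-(1+q)\ln(1-p)$ to obtain $x_n\sim Cn$, with concavity from $f_n''<0$ guaranteeing uniqueness. Your explicit localization step (ruling out bounded, sublinear or superlinear behaviour of $x_n$ before passing to the limit) is in fact a more careful justification of the point the paper settles with the brief assertion that the root of the approximating equation is asymptotically equal to the root of $f_n'$.
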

\begin{proof}
Using the relations (\ref{eq:BetaDef}), (\ref{eq:Polygamma}) and
(\ref{eq:PolyGammaRekursion}) from the Appendix, we have for $x>0$
\begin{eqnarray*}
f'_{n}(x) & = & \big((-\ln B(x+1,n-1)+x(1+{q})\ln(1-p)\big)'\\
 & = & -\frac{\left(\partial B/\partial x\right)(x+1,n-1)}{B(x+1,n-1)}-\underbrace{(1+q)\ln(\frac{1}{1-p})}_{=:C_{0}}\\
 & = & -\left(\frac{\Gamma'(x+1)}{\Gamma(x+1)}-\frac{\Gamma'(x+n)}{\Gamma(x+n)}\right)-C_{0}\\
 & = & \psi_{0}(x+n)-\psi_{0}(x+1)-C_{0}=\sum\limits _{y=1}^{n-1}\frac{1}{x+y}-C_{0},\end{eqnarray*}
where $C_{0}=-(1+q)\ln(1-p)>0$. Now uniformly in $x>0$ we have \[
\sum\limits _{y=1}^{n-1}\frac{1}{x+y}\sim\int_{1}^{n-1}\frac{1}{x+y}\, dy=\ln\left(1+\frac{n-2}{x-1}\right).\]
Hence, the root $y_{n}:=C\left(n-2\right)-1$ of $x\mapsto\log\left(1+\left(n-2\right)/\left(x-1\right)\right)-C_{0}$
is asymptotically equal to the root of $f_{n}'$. 

Using (\ref{eq:Polygamma}) and (\ref{eq:PolyGammaRekursion}) we
get for the second derivative of $f_{n}$ in $x>0$ \begin{eqnarray*}
f''(x) & = & \psi_{1}(x+n)-\psi_{1}(x+1)=\sum\limits _{k=1}^{n-1}-\frac{1}{\left(x+k\right)^{2}}<0.\end{eqnarray*}
This proves the lemma.\end{proof}
\begin{lem}
\label{lem:Asymp} Let $r_{n}:=x_{n}^{\beta}$ with $\beta\in(1/2,1)$,
$\sigma_{n}:=1/\sqrt{-f''_{n}(x_{n})}$, and $x_{n}\sim Cn$, $n\in\N$.
Then, for $n$ tending to infinity, we have \[
\sigma_{n}\sim\sqrt{n}\cdot\sqrt{C^{2}+C}\qquad\mbox{ and }\qquad\frac{r_{n}}{\sigma_{n}}\to\infty.\]
\end{lem}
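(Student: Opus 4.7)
The plan is to evaluate $-f_n''(x_n)$ and its asymptotic behaviour directly from the formula established in the proof of Lemma~\ref{lem:Maximum}, namely
\[
-f_n''(x) \;=\; \sum_{k=1}^{n-1}\frac{1}{(x+k)^{2}}.
\]
Evaluating at $x=x_n$ and comparing the sum with the monotone integral, one has the two-sided bound
\[
\int_{1}^{n}\frac{dy}{(x_n+y)^{2}} \;\le\; \sum_{k=1}^{n-1}\frac{1}{(x_n+k)^{2}} \;\le\; \int_{0}^{n-1}\frac{dy}{(x_n+y)^{2}},
\]
both of whose outer terms are telescopic differences of the form $\tfrac{1}{x_n+a}-\tfrac{1}{x_n+b}$. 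Since $x_n$ and $n$ are of the same order, these boundary shifts by $1$ are negligible, and both bounds are asymptotic to $\tfrac{1}{x_n+1}-\tfrac{1}{x_n+n-1}$.

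Substituting $x_n\sim Cn$ then yields
\[
\frac{1}{x_n+1}-\frac{1}{x_n+n-1}\;\sim\;\frac{1}{Cn}-\frac{1}{(C+1)n}\;=\;\frac{1}{n\,C(C+1)},
\]
so that $-f_n''(x_n)\sim \tfrac{1}{n(C^{2}+C)}$. Taking reciprocal square roots gives the claimed asymptotic $\sigma_n\sim\sqrt{n}\,\sqrt{C^{2}+C}$.

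For the second statement I would just plug in: $r_n=x_n^{\beta}\sim(Cn)^{\beta}=C^{\beta}n^{\beta}$, whence
\[
\frac{r_n}{\sigma_n}\;\sim\;\frac{C^{\beta}\,n^{\beta}}{\sqrt{C^{2}+C}\,\sqrt{n}}\;=\;\frac{C^{\beta}}{\sqrt{C^{2}+C}}\;n^{\beta-1/2}\;\longrightarrow\;\infty,
\]
using $\beta>1/2$. The only step demanding any care is the sum-to-integral comparison in the first paragraph; but since $y\mapsto(x_n+y)^{-2}$ is positive and decreasing, the standard integral test supplies matching upper and lower bounds, and the single boundary term $(x_n+k)^{-2}$ lost in the comparison is $O(n^{-2})$, which is of smaller order than the leading term $\Theta(n^{-1})$. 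No real obstacle is expected; the lemma is essentially a direct computation once Lemma~\ref{lem:Maximum} supplies $x_n\sim Cn$ and the closed form of $f_n''$.
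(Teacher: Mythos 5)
Your proof is correct and follows essentially the same route as the paper: it uses the closed form $-f_n''(x)=\sum_{k=1}^{n-1}(x+k)^{-2}$ from Lemma \ref{lem:Maximum}, replaces the sum by the elementary integral $\int\frac{dy}{(x_n+y)^2}$, and substitutes $x_n\sim Cn$ to get $-f_n''(x_n)\sim\frac{1}{n(C^2+C)}$, from which both claims follow. The only difference is cosmetic: you justify the sum--integral comparison by an explicit two-sided monotonicity bound, whereas the paper simply asserts the asymptotic equivalence, so your write-up is if anything slightly more complete.
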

\begin{proof}
Using the explicit formula for the second derivative of $f_{n}$ and
the fact that $x_{n}\sim Cn$ we get \begin{eqnarray*}
\sigma_{n} & = & \frac{1}{\sqrt{-f''_{n}(x_{n})}}\sim\left(\int_{1}^{n-1}\frac{1}{(x_{n}+y)^{2}}dy\right)^{-1/2}=\left(\frac{2-n}{(x_{n}+n-1)(x_{n}+1)}\right)^{-1/2}\\
 & \sim & \sqrt{n}\cdot\sqrt{C^{2}+C}.\end{eqnarray*}
In the same way we get for $n$ tending to infinity \begin{eqnarray*}
\frac{r_{n}}{\sigma_{n}} & = & x_{n}^{\beta}\sqrt{-f''_{n}(x_{n})}=x_{n}^{\beta}\sqrt{\frac{n}{x_{n}(x_{n}+n)}}\sim\frac{C^{\beta}}{\sqrt{C^{2}+C}}n^{\beta-1/2}\to\infty.\end{eqnarray*}
 \end{proof}
\begin{lem}
\label{chapter:returntimeLEM4} Let $r_{n}:=x_{n}^{\beta}$ with $\beta\in(1/2,1)$,
$n\in\N$, and $\mathcal{U}_{n}:=\left(x_{n}-r_{n},x_{n}+r_{n}\right)$
with $x_{n}\sim Cn$. Then, for $n$ tending to infinity, we have
\[
\eta_{n}:=\sup\left\{ \left|f'''_{n}(\xi)\right|:\xi\in\mathcal{U}_{n}\right\} \asymp\frac{1}{n^{2}}\qquad\mbox{and}\qquad\delta_{n}:=\sigma_{n}^{2}r_{n}\eta_{n}\to0.\]
\end{lem}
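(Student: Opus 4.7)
The plan is to differentiate once more past the formula used in Lemma~\ref{lem:Maximum} and then repeat the integral approximation of Lemma~\ref{lem:Asymp}. First I would compute, using \eqref{eq:Polygamma}, \eqref{eq:PolyGammaRekursion}, and the formula $f''_n(x)=-\sum_{y=1}^{n-1}(x+y)^{-2}$ already established, that
\[
f'''_n(x)=\psi_{2}(x+n)-\psi_{2}(x+1)=2\sum_{y=1}^{n-1}\frac{1}{(x+y)^{3}}>0.
\]
Since $f'''_n$ is strictly decreasing on $(0,\infty)$, the supremum $\eta_n$ is attained at the left endpoint $\xi=x_{n}-r_{n}$.

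Next I would exploit the fact that $r_{n}=x_{n}^{\beta}$ with $\beta<1$ gives $r_{n}=o(x_{n})$, so that $x_{n}-r_{n}\sim x_{n}\sim Cn$. Uniformly in $\xi\in\mathcal{U}_{n}$ the summand $(\xi+y)^{-3}$ therefore remains comparable to $(Cn+y)^{-3}$, and approximating the sum by an integral as in Lemma~\ref{lem:Asymp} yields
\[
\sum_{y=1}^{n-1}\frac{1}{(x_{n}-r_{n}+y)^{3}}\sim\int_{1}^{n-1}\frac{dy}{(x_{n}-r_{n}+y)^{3}}=\frac{1}{2}\left(\frac{1}{(x_{n}-r_{n}+1)^{2}}-\frac{1}{(x_{n}-r_{n}+n-1)^{2}}\right).
\]
Substituting $x_{n}\sim Cn$ this is asymptotic to $\tfrac{1}{2n^{2}}\bigl(C^{-2}-(C+1)^{-2}\bigr)$, which is a strictly positive constant times $n^{-2}$. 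Hence $\eta_{n}\asymp 1/n^{2}$.

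For the second claim I would simply plug in the asymptotics collected so far: Lemma~\ref{lem:Asymp} gives $\sigma_{n}^{2}\sim n(C^{2}+C)$, the definition of $r_{n}$ gives $r_{n}\sim(Cn)^{\beta}$, and the estimate just proved gives $\eta_{n}\asymp n^{-2}$. Combining these,
\[
\delta_{n}=\sigma_{n}^{2}r_{n}\eta_{n}\asymp n\cdot n^{\beta}\cdot n^{-2}=n^{\beta-1},
\]
which tends to $0$ because $\beta\in(1/2,1)$. There is no real obstacle in this lemma: the only point requiring attention is that the integral approximation of the $y$-sum must be uniform in $\xi\in\mathcal{U}_{n}$, and this uniformity is guaranteed by $r_{n}=o(x_{n})$, which keeps $\xi+y$ pinched between constant multiples of $Cn+y$ throughout the window.
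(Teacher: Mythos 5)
Your proposal is correct and follows essentially the same route as the paper: the polygamma recursion gives $f'''_n(x)=2\sum_{y=1}^{n-1}(x+y)^{-3}>0$, the supremum over $\mathcal{U}_n$ sits at the left endpoint $x_n-r_n$, the sum is compared with the corresponding integral to get $\eta_n\asymp n^{-2}$, and combining with $\sigma_n^2\sim n(C^2+C)$ and $r_n\sim(Cn)^{\beta}$ yields $\delta_n\asymp n^{\beta-1}\to0$. No substantive difference from the paper's argument.
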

\begin{proof}
For the third derivative of $f_{n}$ we have by (\ref{eq:Polygamma})
and (\ref{eq:PolyGammaRekursion}) of the Appendix uniformly in $\mathcal{U}_{n}$
\begin{eqnarray*}
f'''_{n}(x) & = & \psi_{2}(x+n)-\psi_{2}(x+1)=\sum\limits _{y=x+1}^{x+n-1}\frac{2}{y^{3}}\sim\int\limits _{1}^{n-1}\frac{2}{(x+y)^{3}}dy\\
 & = & \frac{1}{(x+1)^{2}}-\frac{1}{(x+n-1)^{2}}.\end{eqnarray*}
The maximum of $\left|f_{n}'''\right|$ on the interval $\mathcal{U}_{n}$
is obtained in the left boundary point of this interval and since
$\beta<1$ we obtain \[
\eta_{n}:=\sup\limits _{\xi\in\mathcal{U}_{n}}\left|F'''_{n}(\xi)\right|\asymp\frac{1}{(x_{n}-r_{n}+1)^{2}}-\frac{1}{(x_{n}-r_{n}+n-1)^{2}}\asymp\frac{1}{n^{2}}.\]
Hence, for $n\to\infty$, we have \[
\delta_{n}=r_{n}\sigma_{n}^{2}\eta_{n}\asymp\frac{r_{n}\sigma_{n}^{2}}{n^{2}}=\frac{x_{n}^{\beta+1}(x_{n}+n)}{n^{3}}\sim C^{1+\beta}(C+1)n^{\beta-1}\to0.\]

\end{proof}
Now we are in the position to give a proof of our main theorem.
\begin{proof}
[Proof of  Theorem \ref{thm:AsympMain}] Combining the above lemmas
for $x_{n}$ the unique maximum of $f_{n}$, $r_{n}:=x_{n}^{\beta}$,
$n\in\N,$ for some $\beta\in(1/2,1)$, we deduce from Theorem \ref{thm:Laplace}
and Stirling's Formula ($n!\sim\sqrt{2\pi n}\left(n/\e\right)^{n}$)
that\begin{eqnarray*}
\int_{0}^{\infty}g_{n}\e^{f_{n}(x)}\, dx & \sim & g_{n}\e^{f_{n}(x_{n})}\sqrt{\frac{2\pi}{-f''_{n}(x_{n})}},\\
 & \sim & \sqrt{2\pi Cn(C+1)}\frac{(x_{n}+n-1)!}{(n-1)!(x_{n})!}p^{n}(1-p)^{x_{n}+\left((x_{n}+n)q\right)}\\
 & \sim & \left(\frac{Cn+n-1}{n-1}\right)^{(C+1)n}\left(\frac{n-1}{Cn}\right)^{Cn}p^{n}(1-p)^{Cn(1+q)+nq}.\end{eqnarray*}
Since,\[
\left(\frac{Cn+n-1}{n-1}\right)^{(C+1)n}\sim\e^{C}\left(C+1\right)^{(C+1)n}\qquad\mbox{and }\qquad\left(\frac{n-1}{Cn}\right)^{Cn}\sim\e^{-C}\left(\frac{1}{C}\right)^{Cn}\]
we get\begin{eqnarray*}
\int_{0}^{\infty}g_{n}\e^{f_{n}(x)}\, dx & \sim & \left(\frac{(C+1)^{C+1}}{C^{C}}p(1-p)^{C(1+q)+q}\right)^{n}.\end{eqnarray*}
Due to Lemma \ref{lem:Probability} we are now left to show that \begin{equation}
\sum\limits _{k=0}^{\infty}\e^{f_{n}(k)}\sim\int_{0}^{\infty}\e^{f_{n}(x)}\, dx.\label{eq:SumsimInt}\end{equation}
Since $f_{n}$ is concave we have that $f_{n}$ is increasing on $\left(0,x_{n}\right)$
and decreasing on $\left(x_{n},\infty\right)$. Consequently, \[
\sum\limits _{k=0}^{\left\lfloor x_{n}\right\rfloor -1}\e^{f_{n}(k)}<\int_{0}^{x_{n}}\e^{f_{n}(x)}\, dx<\sum\limits _{x=1}^{\left\lfloor x_{n}\right\rfloor +1}\e^{f_{n}(x)},\]
 and \[
\sum\limits _{k=\left\lfloor x_{n}\right\rfloor +1}^{\infty}\e^{f_{n}(k)}<\int_{x_{n}}^{\infty}\e^{f_{n}(x)}\, dx<\sum\limits _{k=\left\lfloor x_{n}\right\rfloor }^{\infty}\e^{f_{n}(k)}.\]
This gives \[
\left|\sum\limits _{k=0}^{\infty}\e^{f_{n}(k)}-\int_{0}^{\infty}\e^{f_{n}(x)}\, dx\right|\leq\e^{f_{n}(0)}+\e^{f_{n}(\left\lfloor x_{n}\right\rfloor )}\leq2\e^{f_{n}(x_{n})}.\]
 Since by Theorem \ref{thm:Laplace} and Lemma \ref{lem:Asymp} \[
\int_{0}^{\infty}\e^{f_{n}(x)}\, dx\sim\e^{f_{n}(x_{n})}\sqrt{\frac{2\pi}{-f''_{n}(x_{n})}}\asymp\e^{f_{n}(x_{n})}\sqrt{n}\]
the asymptotic in (\ref{eq:SumsimInt}) follows.
\end{proof}

\section{Proof of the generalised Laplace's Method\label{chapter:allgmeinLM} }

\begin{proof}
[Proof of Theorem \ref{thm:Laplace}] Using Taylor's formula we have
for every $x\in(x_{n}-r_{n},x_{n}+r_{n})=:\mathcal{U}_{n}$ that there
exists $\xi_{x}\in\mathcal{U}_{n}$ such that \begin{equation}
F_{n}(x)=\underbrace{F_{n}(x_{n})+\frac{F_{n}^{''}(x_{n})}{2!}(x-x_{n})^{2}}_{=:G_{n}\left(x\right)}+\frac{F_{n}^{'''}(\xi_{x})}{3!}(x-x_{n})^{3}.\label{eq:Taylor}\end{equation}
 The second order Taylor approximation is asymptotically exact on
$\mathcal{U}_{n}$ in the sense that, for $n\to\infty$, \begin{eqnarray*}
\sup\limits _{x,\xi\in\mathcal{U}_{n}}\left|\frac{F_{n}^{'''}(\xi)\left(x-x_{n}\right)^{3}/3!}{F_{n}^{''}(x_{n})\left(x-x_{n}\right)^{2}/2!}\right| & = & \frac{1}{3}\sup\limits _{\xi\in\mathcal{U}_{n}}r_{n}\sigma_{n}^{2}\left|F_{n}^{'''}(\xi)\right|=\frac{1}{3}\delta_{n}\to0.\end{eqnarray*}
This implies that for the integration restricted to $\mathcal{U}_{n}$
we have \begin{eqnarray*}
\int\limits _{\mathcal{U}_{n}}\e^{F_{n}(x)}dx & = & \int\limits _{\mathcal{U}_{n}}\exp\left(G_{n}\left(x\right)+\frac{F_{n}^{'''}(\xi_{x})}{3!}(x-x_{n})^{3}\right)\, dx\\
 & = & \e^{F_{n}(x_{n})}\int\limits _{\mathcal{U}_{n}}\exp\left(-\frac{\left(x-x_{n}\right)^{2}}{2\sigma_{n}^{2}}\left(1+\frac{F_{n}^{'''}(\xi_{x})\left(x-x_{n}\right)^{3}}{3F_{n}^{''}(x_{n})\left(x-x_{n}\right)^{2}}\right)\right)\, dx\\
 & \sim & \e^{F_{n}(x_{n})}\sigma_{n}\int\limits _{-r_{n}/\sigma_{n}}^{r_{n}/\sigma_{n}}\exp\left(-\frac{t^{2}}{2}\right)\, dt\sim\e^{F_{n}(x_{n})}\sigma_{n}\sqrt{2\pi}.\end{eqnarray*}
Hence, we are left to show that, for $n\to\infty$, we have \[
\frac{1}{\sigma_{n}\e^{F_{n}(x_{n})}}\int\limits _{\mathbb{R}\setminus\mathcal{U}_{n}}\e^{F_{n}(x)}\, dx\to0.\]
The error term in the second order Taylor approximation on $\mathcal{U}_{n}$
is at most \[
d_{n}:=\frac{r_{n}^{3}}{3}\sup\left\{ \left|F'''_{n}(\xi)\right|:\xi\in\mathcal{U}_{n}\right\} =\frac{\delta_{n}r_{n}^{2}}{3\sigma_{n}^{2}}.\]
To find an upper bound for the integrand on the complement of $\mathcal{U}_{n}$
we will consider two straight lines connecting the point $\left(x_{n},F_{n}\left(x_{n}\right)\right)$
with the two points $\left(x_{n}\pm r_{n},G_{n}\left(x_{n}\pm r_{n}\right)+d_{n}\right)$.
 The absolut value of the slopes of the two lines is then given by
$r_{n}/\sigma_{n}^{2}\left(1/2+\delta_{n}/3\right)$ and, since $F_{n}$
is concave, on $\R\setminus\mathcal{U}_{n}$ these lines lie necessarily
above the graph of $F_{n}$. Hence, for $n$ tending to infinity,
we obtain the following upper bound for the integral. \begin{eqnarray*}
\frac{1}{\sigma_{n}\e^{F_{n}(x_{n})}}\int\limits _{\R\setminus\mathcal{U}_{n}}\e^{F_{n}(x)}\, dx & \leq & 2\sigma_{n}^{-1}\int\limits _{r_{n}}^{\infty}\e^{-t\left(r_{n}/\sigma_{n}^{2}\left(1/2+\delta_{n}/3\right)\right)}\, dt\\
 & = & \frac{2\sigma_{n}/r_{n}}{\left(1/2+\delta_{n}/3\right)}\e^{-r_{n}^{2}/\sigma_{n}^{2}\left(1/2+\delta_{n}/3\right)}\to0.\end{eqnarray*}
Combining all of the above, we therefore obtain\begin{eqnarray*}
\int_{\mathbb{R}}\e^{F_{n}(x)}dx & = & \int\limits _{\mathcal{U}_{n}}\e^{F_{n}(x)}\, dx+\int\limits _{\mathbb{R}\setminus\mathcal{U}_{n}}\e^{F_{n}(x)}\, dx\\
 & = & \int\limits _{x_{n}-r_{n}}^{x_{n}+r_{n}}\e^{F_{n}(x)}\, dx\left(1+\frac{\int\limits _{\mathbb{R}\setminus\mathcal{U}_{n}}\e^{F_{n}(x)}\, dx}{\int\limits _{x_{n}-r_{n}}^{x_{n}+r_{n}}\e^{F_{n}(x)}\, dx}\right)\\
 & \sim & e^{F_{n}(x_{n})}\sqrt{2\pi}\sigma_{n}\left(1+\frac{\int\limits _{\mathbb{R}\setminus\mathcal{U}_{n}}\e^{F_{n}(x)}\, dx}{\e^{F_{n}(x_{n})}\sqrt{2\pi}\sigma_{n}}\right)\sim\sqrt{2\pi}\sigma_{n}\e^{F_{n}(x_{n})}.\end{eqnarray*}
\end{proof}
\begin{rem}
\label{rem:OneSided}If the condition $\mathcal{U}_{n}\subset I$
is not fulfilled for all sufficient large $n\in\N$, then a one-sided
version of Theorem \ref{thm:Laplace} still may hold. \end{rem}
\begin{enumerate}
\item \emph{If $I=[a,\infty)$ and $\frac{x_{n}-a}{\sigma_{n}}\stackrel{n\rightarrow\infty}{\rightarrow}0$,
then we have \[
\int_{I}\e^{F_{n}(x)}dx\sim\e^{F_{n}(x_{n})}\sqrt{\frac{\pi}{-2F''_{n}(x_{n})}}.\]
 }
\item \emph{If $I=(\infty,b]$ and $\frac{x_{n}-b}{\sigma_{n}}\stackrel{n\rightarrow\infty}{\rightarrow}0$,
then we have \[
\int_{I}\e^{F_{n}(x)}dx\sim\e^{F_{n}(x_{n})}\sqrt{\frac{\pi}{-2F''_{n}(x_{n})}}.\]
}
\end{enumerate}
For the proof of these claims one has to consider the one-sided (say
right-sided for case (1)) Taylor expansion of $F_{n}$ and mainly
use the elementary fact that with $h_{n}:x\mapsto\frac{1}{\sigma_{n}\sqrt{2\pi}}\exp\left(-\frac{1}{2}\left(\frac{x-x_{n}}{\sigma_{n}}\right)^{2}\right)$
we have under the assumptions in (1) \[
\int_{a}^{a+r_{n}}h_{n}(x)\, dx\sim\frac{1}{2}.\]

\section{Appendix}

\textit{\emph{For a good account on the following facts we refer the
reader to \cite{abramowitz+stegun}. }}

\textit{\emph{For positive $x\in\R$ the }}\textit{Gamma Function
}\textit{\emph{is given by}} \begin{equation}
\Gamma(x):=\int_{0}^{\infty}t^{x-1}\e^{-t}dt,\label{eq:GammaDef}\end{equation}
\textit{\emph{and for positive $x$ and $y$  the}}\textit{ Beta Function
}\textit{\emph{is given by}} \begin{equation}
B(x,y):=\frac{\Gamma(x)\Gamma(y)}{\Gamma(x+y)}.\label{eq:BetaDef}\end{equation}
\textit{\emph{For the partial derivative of the Beta Function we have}}
\begin{equation}
\frac{\partial}{\partial x}B(x,y)=B(x,y)\left(\frac{\Gamma'(x)}{\Gamma(x)}-\frac{\Gamma'(x+y)}{\Gamma(x+y)}\right)=B(x,y)(\psi_{0}(x)-\psi_{0}(x+y))\label{eq:Digamma}\end{equation}
\textit{\emph{where $\psi_{n}$ denotes the}}\textit{ Polygamma Function
}\textit{\emph{defined, for $n\in\N$, by}} \begin{equation}
\psi_{n}(x):=\frac{\partial^{n+1}}{\partial x^{n+1}}\ln\Gamma(x).\label{eq:Polygamma}\end{equation}
The following recursion formula holds \begin{equation}
\psi_{n}(x+1)=\psi_{n}(x)+\frac{(-1)^{n}n!}{x^{n+1}}.\label{eq:PolyGammaRekursion}\end{equation}

\end{document}